\newtheorem{definition}{Definition}
\newtheorem{proposition}{Proposition}
\newtheorem{lemma}{Lemma}
\def\ps@pprintTitle{%
	\let\@oddhead\@empty
	\let\@evenhead\@empty
	\def\@oddfoot{}%
	\let\@evenfoot\@oddfoot}
\begin{document}

\begin{frontmatter}

\title{On characterizations of the covariance matrix}

\author{Joni Virta}
\address{Aalto University,\\ Department of Mathematics and Systems Analysis,\\ Finland\\ \medskip
University of Turku, \\ Department of Mathematics and Statistics,\\ Finland\\}

\begin{abstract}
The covariance matrix is well-known for its following properties: affine equivariance, additivity, independence property and full affine equivariance. Generalizing the first one leads into the study of scatter functionals, commonly used as plug-in estimators to replace the covariance matrix in robust statistics. However, if the application requires also some of the other properties of the covariance matrix listed earlier, the success of the plug-in depends on whether the candidate scatter functional possesses these. In this short note we show that under natural regularity conditions the covariance matrix is the only scatter functional that is additive and the only scatter functional that is full affine equivariant.
\end{abstract}

\begin{keyword}
Additivity\sep affine equivariance\sep independence\sep scatter functional
\MSC[2010] 62H05 \sep 62H20
\end{keyword}

\end{frontmatter}


\section{The properties of the covariance matrix}

Let $\left( \Omega, \mathcal{F}, \mathbb{P} \right)$ be a probability space and let $\mathcal{D}(\mathbb{R})$ be a suitable, rich enough class of square integrable random variables ${x}: \Omega \rightarrow \mathbb{R}$ that contains the standard normal distribution. Assume further that the product spaces $ \mathcal{D}(\mathbb{R}^p) = \mathcal{D}(\mathbb{R}) \times \cdots \times \mathcal{D}(\mathbb{R}) $ are closed under affine transformations: $ \textbf{A} \textbf{x} + \textbf{b} \in \mathcal{D}(\mathbb{R}^p)$ for all $ p \in \mathbb{N} $, $ \textbf{x} = (x_1, \ldots , x_p)^\top \in \mathcal{D}(\mathbb{R}^p)$, invertible $\textbf{A} \in \mathbb{R}^{p \times p}$ and $\textbf{b} \in \mathbb{R}^p$. Fixing next $ p $, the ordinary covariance matrix $\mathrm{Cov}:\mathcal{D}(\mathbb{R}^p) \rightarrow \mathbb{R}^{p \times p}_{++}$ can now be thought of as a functional from the class $\mathcal{D}(\mathbb{R}^p)$ to the space $\mathbb{R}^{p \times p}_{++}$ of all $p \times p$ positive-definite matrices and any standard text in probability lists its following properties:

\begin{itemize}
	\item[\#1] $\mathrm{Cov}$ is affine equivariant in the sense that $\mathrm{Cov}(\textbf{A} \textbf{x} + \textbf{b}) = \textbf{A} \mathrm{Cov}(\textbf{x}) \textbf{A}^\top$ for all $\textbf{x} \in \mathcal{D}(\mathbb{R}^p)$, invertible $\textbf{A} \in \mathbb{R}^{p \times p}$ and $\textbf{b} \in \mathbb{R}^p$.
	\item[\#2] $\mathrm{Cov}$ is additive in the sense that $\mathrm{Cov}(\textbf{x} + \textbf{y}) = \mathrm{Cov}(\textbf{x}) + \mathrm{Cov}(\textbf{y})$ for all pairs $\textbf{x}, \textbf{y} \in \mathcal{D}(\mathbb{R}^p)$ such that $ \textbf{x} $ and $ \textbf{y} $ are independent.
	\item[\#3] $\mathrm{Cov}$ has the independence property, i.e., $\{ \mathrm{Cov}( \textbf{x} ) \}_{jk} = 0$ for all $\textbf{x} \in \mathcal{D}(\mathbb{R}^p)$ with independent $j$th and $k$th components. 
\end{itemize}

As is common in mathematics, isolating the key characteristics of an object and studying their implications with no reference to the original object yields a rich theory also in the context of the previous three properties. If we focus on the first of them, affine equivariance, and consider the class of all positive-definite matrix-valued functionals which satisfy $\#1$, we obtain what are called \textit{scatter functionals}, studied extensively especially in the robust community as alternative measures of dispersion to the covariance matrix. Famous examples of scatter functionals include e.g. the M-functionals, S-functionals and the minimum covariance determinant (MCD) estimator, see \cite{tyler2009invariant, rousseeuw2013high, dumbgen2015m} and the references therein.

\begin{definition}
	A functional $\textbf{S}:\mathcal{D}(\mathbb{R}^p) \rightarrow \mathbb{R}^{p \times p}_{++}$ is a scatter functional if $\textbf{S}(\textbf{A} \textbf{x} + \textbf{b}) = \textbf{A} \textbf{S}(\textbf{x}) \textbf{A}^\top$ for all $\textbf{x} \in \mathcal{D}(\mathbb{R}^p)$, invertible $\textbf{A} \in \mathbb{R}^{p \times p}$ and $\textbf{b} \in \mathbb{R}^p$.
\end{definition}

We will denote the class of all scatter functionals by $\mathcal{S}$ and as all our following work will happen in $\mathcal{S}$, the location invariance in $ \#1 $ guarantees that we may without loss of generality assume $\mathrm{E}(\textbf{x}) = \textbf{0}$. Furthermore, as the structural properties of $\textbf{S} \in \mathcal{S}$ and $c \textbf{S}$ are the same for all scalars $c > 0$, we will ``standardize'' the members of $\mathcal{S}$ in order to obtain a single representative from each of the equivalence classes. Recall that if $\textbf{z} \in \mathcal{D}(\mathbb{R}^p)$ has a spherical distribution \cite{fang1990symmetric}, that is, $\textbf{z} \sim \textbf{U} \textbf{z}$ for all orthogonal matrices $\textbf{U} \in \mathbb{R}^{p \times p}$, then all scatter functionals evaluated at $\textbf{z}$ are proportional to the identity matrix. Thus any spherical distribution can be used as a reference in the standardization and we choose to use the most common one, the multivariate standard normal distribution, and restrict $\mathcal{S}$ to contain precisely those scatter functionals for which $\textbf{S}(\textbf{z}) = \textbf{I}_p$ for $\textbf{z} \sim  \mathcal{N}(\textbf{0}, \textbf{I}_p)$. As a consequence, if $\textbf{x} \sim  \mathcal{N}(\textbf{0}, \boldsymbol{\Sigma})$ for some $\boldsymbol{\Sigma}$ then $\textbf{S}(\textbf{x}) = \boldsymbol{\Sigma}$ for any $\textbf{S} \in \mathcal{S}$.

The properties $\#2$ and $\#3$ are much less studied than the affine equivariance, and never in isolation of $\#1$. \cite{nordhausen2015cautionary} express their interest of finding out whether any other functional than $\mathrm{Cov}$ simultaneously satisfies $\#1$ and $\#2$, citing factor analysis and structural equation modelling as motivations: in any multivariate model with two independent sources of variation, $\textbf{x} = \textbf{z} + \boldsymbol{\epsilon}$, $\textbf{z}, \boldsymbol{\epsilon} \in \mathcal{D}(\mathbb{R}^p)$, the ``variance decomposition'' $\textbf{S}(\textbf{x}) = \textbf{S}(\textbf{z}) + \textbf{S}(\boldsymbol{\epsilon})$ holds only if $\textbf{S} \in \mathcal{S}$ satisfies $\#2$ in addition to $\#1$. Thus the plugging-in of an arbitrary scatter functional without the additivity property to replace the covariance matrix is unwarranted if the theory is dependent on the previous decomposition, such as in the classical factor analysis. In Section \ref{sec:results} we will prove that, under natural regularity assumptions, $\mathrm{Cov}$ is actually the only additive scatter functional in the sense of the next definition, where we have implicitly assumed that the space $ \mathcal{D}(\mathbb{R}^p) $ is closed under addition.

\begin{definition}\label{def:additive_scatter}
	The scatter functional $\textbf{S}:\mathcal{D}(\mathbb{R}^p) \rightarrow \mathbb{R}^{p \times p}_{++}$ is additive if $\textbf{S}(\textbf{x} + \textbf{y}) = \textbf{S}(\textbf{x}) + \textbf{S}(\textbf{y})$ for all pairs $\textbf{x}, \textbf{y} \in \mathcal{D}(\mathbb{R}^p)$ such that $ \textbf{x} $ and $ \textbf{y} $ are independent.
\end{definition}

Simple induction now reveals that any additive scatter functional satisfies $\textbf{S}(\sum_{i=1}^{n} \textbf{x}_i) = \sum_{i=1}^n \textbf{S}(\textbf{x}_i)$ for any finite number $ n $ of independent random vectors $ \textbf{x}_i \in \mathcal{D}(\mathbb{R}^p)$, $ i = 1, \ldots , n $.

The property $\#3$, and especially its certain variant, is more explored in the literature and we will return to them in the discussion section. Instead, we will discuss a stronger form of $\#1$ which also holds for $\mathrm{Cov}$.

\begin{itemize}
\item[\#4] $\mathrm{Cov}$ is affine equivariant in the sense that $\mathrm{Cov}(\textbf{A} \textbf{x} + \textbf{b}) = \textbf{A} \mathrm{Cov}(\textbf{x}) \textbf{A}^\top$ for all $ k \in \mathbb{N} $, $\textbf{x} \in \mathcal{D}(\mathbb{R}^p)$, $\textbf{A} \in \mathbb{R}^{k \times p}$ and $\textbf{b} \in \mathbb{R}^k$.
\end{itemize}
We call the above property \textit{full affine equivariance} and note that it relies on abuse of notation in the sense that it uses $\mathrm{Cov}$ both as a function on $\mathcal{D}(\mathbb{R}^p)$ and as a function on $\mathcal{D}(\mathbb{R}^k)$. Thus, to be completely rigorous, we provide a definition for full affine equivariant scatter functionals which takes into account the multiple dimensionalities.

\begin{definition}\label{def:fae_scatter}
	The scatter functional $\textbf{S}:\mathcal{D}(\mathbb{R}^p) \rightarrow \mathbb{R}^{p \times p}_{+}$ is full affine equivariant if there exists a countable set of functionals $\mathbb{S} = \{ \textbf{S}_k:\mathcal{D}(\mathbb{R}^k) \rightarrow \mathbb{R}^{k \times k}_{+} \mid  k \in \mathbb{N} \}$ satisfying
	\begin{itemize}
		\item[i)] $ \textbf{S}(\textbf{x}) = \textbf{S}_p(\textbf{x}) $ for all $\textbf{x} \in \mathcal{D}(\mathbb{R}^p)$,
		\item[ii)] $\textbf{S}_k(\textbf{A} \textbf{x} + \textbf{b}) = \textbf{A} \textbf{S}_\ell(\textbf{x}) \textbf{A}^\top$ for all $ k, \ell \in \mathbb{N} $, $ \textbf{x} \in \mathcal{D}(\mathbb{R}^\ell) $ $\textbf{A} \in \mathbb{R}^{k \times \ell}$ and $\textbf{b} \in \mathbb{R}^k$.
	\end{itemize}
\end{definition}

Implicit in Definition \ref{def:fae_scatter} is the assumption that the countable sequence of spaces $ \mathcal{D}(\mathbb{R}), \mathcal{D}(\mathbb{R}^2), \ldots $ is closed with respect to affine transformations in the sense that $\textbf{A} \textbf{x} + \textbf{b} \in \mathcal{D}(\mathbb{R}^k)$ for all $ k, \ell \in \mathbb{N} $, $ \textbf{x} \in \mathcal{D}(\mathbb{R}^\ell) $ $\textbf{A} \in \mathbb{R}^{k \times \ell}$ and $\textbf{b} \in \mathbb{R}^k$. The inclusion of rank-deficient affine transformations has also caused us to broaden the definition of scatter functionals to be able to take values that are singular, hence the change of range from positive-definite to positive semidefinite matrices $ \mathbb{R}^{p \times p}_{+} $ in Definition \ref{def:fae_scatter}.

In the light of Definition \ref{def:fae_scatter} it is obvious that the property $\#4$ makes in some sense a much stronger statement about the parent scatter functional $\textbf{S} \in \mathcal{S}$ than $\#1$, $\#2$ and $\#3$ in that it makes a connection between the instances of $\textbf{S}$ in different dimensions. In fact, full affine equivariance turns out to be such a strong property that, under the same regularity conditions that we exercise with additivity, again $ \mathrm{Cov} $ is the only scatter functional enjoying it.

\section{Two regularity conditions}

Before moving on to our main results we introduce and comment on the regularity conditions we will impose on all of our scatter functionals in the following. The first of these is continuity with respect to the convergence in distribution to the normal distribution.

\begin{definition}\label{def:normal_continuous}
	A scatter functional $\textbf{S} \in \mathcal{S}$ is normal continuous if for any sequence of zero-mean random variables $\textbf{x}_n \in \mathcal{D}(\mathbb{R}^p)$ converging in distribution to $\mathcal{N}(\textbf{0}, \boldsymbol{\Sigma})$ for $ \boldsymbol{\Sigma} \in \mathbb{R}^{p \times p}_{++} $, we have $\textbf{S}(\textbf{x}_n) \rightarrow \boldsymbol{\Sigma}$ in some matrix norm $\| \cdot \|$.
\end{definition}

The normal continuity is a special instance of weak continuity and sufficient conditions for the latter in the case of M-functionals of scatter with known location are given in Section 6.1 of \cite{dumbgen2015m}. In particular, any M-functional of scatter with bounded and strictly increasing $ \psi $-function is weakly continuous. Many popular robust scatter functionals such as the multivariate $ t $-functionals of scatter \citep{dumbgen2005breakdown} fall into this class.

Our second condition says that the value of a scatter functional must be uniquely defined by the marginal distribution of its argument and not be dependent on any external variables.

\begin{definition}\label{def:self_contained}
	A scatter functional $ \textbf{S} \in \mathcal{S} $ is self-contained if $ \textbf{S}(\textbf{x}_1) = \textbf{S}(\textbf{x}_2) $ for all pairs $ \textbf{x}_1, \textbf{x}_2 \in \mathcal{D}(\mathbb{R}^p) $ such that $ \textbf{x}_1 $ and $ \textbf{x}_2 $ are identically distributed.
\end{definition}

The assumption on self-containedness is needed to exclude scatter functionals such as $\textbf{S}_{SIR} (\textbf{x}) = \mathrm{E}\{ \mathrm{E}(\textbf{x} \mid y) \mathrm{E}(\textbf{x} \mid y)^\top  \} $ where $ y $ is some suitable ``response'' variable. $ \textbf{S}_{SIR} $, which is used in sliced inverse regression \cite{li1991sliced}, is clearly full affine equivariant but has the same structure with respect to the argument $ \textbf{x} $ as the covariance matrix and is as such fundamentally equivalent to $ \mathrm{Cov} $.

\section{The main results}\label{sec:results}

We next state and prove our main results that under the regularity conditions of the previous section $ \mathrm{Cov} $ is the only additive scatter functional and the only full affine equivariant scatter functional. Various lemmas that are needed for the proof of Proposition \ref{prop:fae} below are collected in \ref{sec:appendix}.

\begin{proposition}\label{prop:additivity}
Let $\textbf{S} \in \mathcal{S}$ be normal continuous, self-contained and additive. Then $\textbf{S} = \mathrm{Cov}$. 
\end{proposition}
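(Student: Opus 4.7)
The plan is to apply the central limit theorem and rely on the three conditions (affine equivariance, additivity, and self-containedness) to reduce $\mathbf{S}(\mathbf{x})$ at any $\mathbf{x}$ to $\mathbf{S}$ evaluated at a sequence of random vectors converging in distribution to a normal, then invoke normal continuity together with the standardization $\mathbf{S}(\mathbf{z}) = \boldsymbol{\Sigma}$ for $\mathbf{z} \sim \mathcal{N}(\mathbf{0},\boldsymbol{\Sigma})$.

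First, by affine equivariance (translation invariance), I can assume without loss of generality that $\mathrm{E}(\mathbf{x}) = \mathbf{0}$. Fix such an $\mathbf{x} \in \mathcal{D}(\mathbb{R}^p)$ with $\boldsymbol{\Sigma} = \mathrm{Cov}(\mathbf{x})$ and let $\mathbf{x}_1, \mathbf{x}_2, \ldots$ be independent copies of $\mathbf{x}$ (these lie in $\mathcal{D}(\mathbb{R}^p)$, which is closed under the relevant operations). Self-containedness yields $\mathbf{S}(\mathbf{x}_i) = \mathbf{S}(\mathbf{x})$ for every $i$, so by the additivity property (extended by induction, as noted after Definition \ref{def:additive_scatter}) we get
\begin{equation*}
\mathbf{S}\!\left( \sum_{i=1}^n \mathbf{x}_i \right) = \sum_{i=1}^n \mathbf{S}(\mathbf{x}_i) = n\, \mathbf{S}(\mathbf{x}).
\end{equation*}
Applying affine equivariance with the scalar matrix $n^{-1/2}\mathbf{I}_p$ then gives
\begin{equation*}
\mathbf{S}\!\left( \frac{1}{\sqrt{n}} \sum_{i=1}^n \mathbf{x}_i \right) = \frac{1}{n}\, \mathbf{S}\!\left( \sum_{i=1}^n \mathbf{x}_i \right) = \mathbf{S}(\mathbf{x}),
\end{equation*}
valid for every $n \in \mathbb{N}$.

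The second step is to let $n \to \infty$. Since $\mathbf{x}$ is square integrable with mean zero and covariance $\boldsymbol{\Sigma}$, the multivariate central limit theorem gives $n^{-1/2}\sum_{i=1}^n \mathbf{x}_i \xrightarrow{d} \mathcal{N}(\mathbf{0}, \boldsymbol{\Sigma})$, and $\boldsymbol{\Sigma} \in \mathbb{R}^{p\times p}_{++}$ by the standing assumption on $\mathcal{D}(\mathbb{R}^p)$. Normal continuity then delivers $\mathbf{S}(n^{-1/2}\sum_{i=1}^n \mathbf{x}_i) \to \boldsymbol{\Sigma}$, and combined with the displayed equality this forces $\mathbf{S}(\mathbf{x}) = \boldsymbol{\Sigma} = \mathrm{Cov}(\mathbf{x})$. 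Since $\mathbf{x}$ was arbitrary, the proposition follows.

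The only place where any delicacy arises is the verification that each tool fires exactly once: self-containedness is indispensable for passing from independent copies to a common value $\mathbf{S}(\mathbf{x})$, affine equivariance converts the normalization constant $n^{-1/2}$ into the scalar factor $1/n$ in front of the scatter, additivity collapses the sum, and normal continuity handles the limit. None of these steps involves a difficult computation; the content of the result lies in the fact that the CLT plus the equivariance and additivity together suffice to pin down the value on every distribution, which is the only conceptually interesting part of the argument.
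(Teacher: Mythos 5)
Your proposal is correct and follows essentially the same route as the paper: form $n$ independent copies, use self-containedness, additivity and affine equivariance (with the $n^{-1/2}$ scaling) to show $\textbf{S}\{(1/\sqrt{n})\sum_{i=1}^n \textbf{x}_i\} = \textbf{S}(\textbf{x})$ for every $n$, then apply the central limit theorem and normal continuity to conclude $\textbf{S}(\textbf{x}) = \boldsymbol{\Sigma}$. The only difference is presentational, in that you spell out the intermediate step $\textbf{S}(\sum_{i=1}^n \textbf{x}_i) = n\,\textbf{S}(\textbf{x})$ before rescaling, which the paper's proof compresses into a single display.
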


\begin{proof}
Let $\textbf{x} \in \mathcal{D}(\mathbb{R}^p)$ be arbitrary with zero mean and covariance matrix $\mathrm{Cov}(\textbf{x}) = \boldsymbol{\Sigma}$ and let $\textbf{x}_1, \ldots , \textbf{x}_n$ be a random sample from the distribution of $\textbf{x}$. Then by additivity and affine equivariance,
\[
\textbf{S} \left( \frac{1}{\sqrt{n}} \sum_{i = 1}^n \textbf{x}_i \right) = \frac{1}{n} \textbf{S}(\textbf{x}_1) + \ldots + \frac{1}{n} \textbf{S}(\textbf{x}_n) = \textbf{S}(\textbf{x}),
\]
where the self-containedness of $ \textbf{S} $ is used to obtain the final equality. Now, by the central limit theorem $(1/\sqrt{n}) \sum_{i = 1}^n \textbf{x}_i \rightarrow_D \mathcal{N}(\textbf{0}, \boldsymbol{\Sigma})$ and the continuity of $\textbf{S}$ means that for every $\epsilon > 0$ there exists $N = N(\epsilon)$ such that
\[
\| \textbf{S} \left( \frac{1}{\sqrt{n}} \sum_{i = 1}^n \textbf{x}_i \right) - \textbf{S} \left\{ \mathcal{N}(\textbf{0}, \boldsymbol{\Sigma}) \right\} \| < \epsilon, \quad \forall n > N.
\]
But now $\textbf{S} \{ (1/\sqrt{n}) \sum_{i = 1}^n \textbf{x}_i \} = \textbf{S}(\textbf{x}) $ and $\textbf{S} \left\{ \mathcal{N}(\textbf{0}, \boldsymbol{\Sigma}) \right\} = \boldsymbol{\Sigma}$, meaning that
\[
\forall \epsilon > 0, \quad \| \textbf{S} \left( \textbf{x} \right) - \boldsymbol{\Sigma} \| < \epsilon,
\]
and consequently, $\textbf{S}(\textbf{x}) = \boldsymbol{\Sigma} = \mathrm{Cov}(\textbf{x})$.
\end{proof}

\begin{proposition}\label{prop:fae}
	Let $\textbf{S} \in \mathcal{S}$ be normal continuous, self-contained and full affine equivariant. Then $\textbf{S} = \mathrm{Cov}$. 
\end{proposition}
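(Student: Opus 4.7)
The plan is to reuse the CLT-plus-normal-continuity template from the proof of Proposition \ref{prop:additivity}, with the role of additivity replaced by an explicit block computation of the joint scatter of iid copies afforded by full affine equivariance. Fix $\textbf{x} \in \mathcal{D}(\mathbb{R}^p)$ with $\mathrm{E}(\textbf{x}) = \textbf{0}$ and $\mathrm{Cov}(\textbf{x}) = \boldsymbol{\Sigma}$, let $\textbf{x}_1, \ldots , \textbf{x}_n$ be iid copies, and form the stacked vector $\textbf{z} = (\textbf{x}_1^\top, \ldots , \textbf{x}_n^\top)^\top \in \mathcal{D}(\mathbb{R}^{np})$. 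The goal is to express $\textbf{S}_p\{n^{-1/2} \sum_{i=1}^n \textbf{x}_i\}$ through the blocks of $\textbf{S}_{np}(\textbf{z})$, afforded by the companion functional supplied by Definition \ref{def:fae_scatter}, and then let $n$ tend to infinity.

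First I would determine the block structure of $\textbf{S}_{np}(\textbf{z})$, viewed as an $n \times n$ array of $p \times p$ blocks. Projecting onto slot $i$ with the matrix $\textbf{E}_i = [\textbf{0}, \ldots , \textbf{I}_p, \ldots , \textbf{0}] \in \mathbb{R}^{p \times np}$ gives $\textbf{E}_i \textbf{z} = \textbf{x}_i$, so condition (ii) of Definition \ref{def:fae_scatter} combined with self-containedness forces every diagonal block to equal the common value $\textbf{D} := \textbf{S}(\textbf{x})$. The exchangeability of iid copies, together with self-containedness and condition (ii) applied to the block-permutation matrices $\textbf{P}_\pi$, yields $\textbf{S}_{np}(\textbf{z}) = \textbf{P}_\pi \textbf{S}_{np}(\textbf{z}) \textbf{P}_\pi^\top$ for every permutation $\pi$; this forces all off-diagonal blocks to coincide with a single matrix $\textbf{N}$, and the swap of two indices coupled with the symmetry of the scatter matrix further gives $\textbf{N} = \textbf{N}^\top$.

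With this structure in hand, condition (ii) of Definition \ref{def:fae_scatter} applied with $\textbf{A} = n^{-1/2} [\textbf{I}_p, \ldots , \textbf{I}_p] \in \mathbb{R}^{p \times np}$ gives
\[
\textbf{S}_p \left( \frac{1}{\sqrt{n}} \sum_{i=1}^n \textbf{x}_i \right) = \textbf{A} \textbf{S}_{np}(\textbf{z}) \textbf{A}^\top = \textbf{D} + (n-1) \textbf{N}.
\]
The central limit theorem yields $n^{-1/2} \sum_i \textbf{x}_i \rightarrow_D \mathcal{N}(\textbf{0}, \boldsymbol{\Sigma})$, and normal continuity then forces $\textbf{D} + (n-1) \textbf{N}$ to converge in matrix norm to $\boldsymbol{\Sigma}$; as convergent sequences are bounded, this is only possible if $\textbf{N} = \textbf{0}$, after which passing to the limit gives $\textbf{S}(\textbf{x}) = \textbf{D} = \boldsymbol{\Sigma} = \mathrm{Cov}(\textbf{x})$. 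The substantive step is the block-structure computation: full affine equivariance per se says nothing about how the $n$ blocks of $\textbf{S}_{np}(\textbf{z})$ are coupled, and it is exchangeability together with self-containedness that produce the rigid compound-symmetric form and isolate the single obstruction $\textbf{N}$ which the CLT is then able to annihilate.
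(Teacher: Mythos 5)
Your overall architecture is sound and in fact more streamlined than the paper's: stacking the iid copies, showing $\textbf{S}_{np}(\textbf{z})$ is compound symmetric with diagonal blocks $\textbf{D}=\textbf{S}(\textbf{x})$ and a common off-diagonal block $\textbf{N}$, and then letting the CLT and normal continuity kill $\textbf{N}$ recovers the whole matrix $\boldsymbol{\Sigma}$ in one pass, whereas the paper treats diagonal and off-diagonal entries separately. But there is a genuine gap in the step you yourself flag as substantive. The hypothesis grants self-containedness only of $\textbf{S}=\textbf{S}_p$; the companion functionals $\textbf{S}_k$ of Definition \ref{def:fae_scatter} are constrained by nothing except the equivariance condition (ii), and self-containedness does not automatically propagate \emph{upward} in dimension. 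Your treatment of the off-diagonal blocks invokes self-containedness of $\textbf{S}_{np}$ (to get $\textbf{S}_{np}(\textbf{P}_\pi\textbf{z})=\textbf{S}_{np}(\textbf{z})$ from $\textbf{P}_\pi\textbf{z}\sim\textbf{z}$), which is exactly what is not available. This is not a pedantic point: the paper's Lemma \ref{lem:self_contained} transfers self-containedness only to $\textbf{S}_k$ with $k\le p$, and Lemma \ref{lem:self_contained_2} exists precisely because one upward step ($p=1$ to $\textbf{S}_2$) is unavoidable; the paper's proof then deliberately reduces every entry of the $n$-dimensional scatter to $\textbf{S}_1$ and $\textbf{S}_2$ via Lemma \ref{lem:subvector} so that no self-containedness in dimension $>\max(p,2)$ is ever needed.

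A second, related omission: even granting the permutation argument within a fixed $n$, your $\textbf{N}$ is a priori $\textbf{N}_n$, a quantity attached to the $np$-dimensional functional, and nothing in your write-up shows it is the same matrix for different $n$. Without $n$-independence the conclusion fails: $\textbf{D}+(n-1)\textbf{N}_n\to\boldsymbol{\Sigma}$ is perfectly compatible with $\textbf{N}_n\neq\textbf{0}$ of order $1/n$, so you cannot conclude $\textbf{D}=\boldsymbol{\Sigma}$. Both defects are repairable with the paper's tools: by condition (ii) (or Lemma \ref{lem:subvector}) the $(a,b)$ entry of the $(i,j)$ block of $\textbf{S}_{np}(\textbf{z})$, $i\neq j$, equals $\{\textbf{S}_2((x_{i,a},x_{j,b})^\top)\}_{12}$, and $(x_{i,a},x_{j,b})$ has the distribution of $(x_a',x_b'')$ for independent copies $\textbf{x}',\textbf{x}''$ of $\textbf{x}$, independently of $i,j,n$; self-containedness of $\textbf{S}_2$ (Lemma \ref{lem:self_contained} when $p\ge 2$, Lemma \ref{lem:self_contained_2} when $p=1$) then yields a single $\textbf{N}$ independent of $i,j$ and $n$, after which your limit argument goes through. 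As submitted, however, the proof rests on an unproved self-containedness in dimension $np$ and on an unestablished cross-$n$ consistency of $\textbf{N}$.
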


\begin{proof}
By the definition of full affine equivariant scatter functionals there exists a countable set of scatter functionals $ \textbf{S}_1, \textbf{S}_2 \ldots $ that satisfies the conditions of Definition \ref{def:fae_scatter}.
 Letting now $ \textbf{x} = (x_1, \ldots , x_p) \in \mathcal{D}(\mathbb{R}^p) $ be arbitrary with zero mean and covariance matrix $\mathrm{Cov}(\textbf{x}) = \boldsymbol{\Sigma}$, by Lemma \ref{lem:subvector} the first diagonal element of $ \textbf{S}(\textbf{x}) $ is $\{\textbf{S}(\textbf{x})\}_{11} = \textbf{S}_1(x_1)$. Take now a random sample $ \textbf{y} = ( y_1, \ldots , y_n )^\top $ from the distribution of $ x_1 $. Then by full affine equivariance and Lemma \ref{lem:subvector},
 \begin{equation}\label{eq:fae_expansion}
 \textbf{S}_1 \left( \frac{1}{\sqrt{n}} \sum_{i = 1}^n y_i \right) = \frac{1}{n} \mathds{1}^\top \textbf{S}_n(\textbf{y}) \mathds{1} = \frac{1}{n} \sum_{i=1}^n \textbf{S}_1(y_i) + \frac{1}{n}  \sum_{i=1}^n  \sum_{j = 1, j \neq i}^n \{ \textbf{S}_2(\textbf{y}_{ij}) \}_{1 2},
 \end{equation}
 where $\textbf{y}_{ij} = (y_i, y_j)^\top$. By Lemma \ref{lem:self_contained} both $ \textbf{S}_1 $ and $ \textbf{S}_2 $ are self-contained (by Lemma \ref{lem:self_contained_2} in the case $ p = 1 $) and thus $ \textbf{S}_1(y_i) $ equals $\textbf{S}_1(x_1)$ for all $ i = 1, \ldots , n $ and $ \{ \textbf{S}_2(\textbf{y}_{ij}) \}_{1 2}$ is equal to some $ c(x_1) \in \mathbb{R} $, independent of $ i, j, n $.
 
 Thus \eqref{eq:fae_expansion} gets the form,
 \begin{equation}\label{eq:fae_expansion_2} 
 \textbf{S}_1 \left( \frac{1}{\sqrt{n}} \sum_{i = 1}^n y_i \right) = \textbf{S}_1(x_1) + (n - 1)  c(x_1).
 \end{equation}
 Assume next that $ c(x_1) \neq 0 $ and take the limit with respect to $ n $ on both sides of \eqref{eq:fae_expansion_2}. By Lemma \ref{lem:normal_continuous}, $ \textbf{S}_1 $ is normal continuous and consequently the left-hand side of \eqref{eq:fae_expansion_2} approaches $ \mathrm{Var}(y_1) = \mathrm{Var}(x_1) = (\boldsymbol{\Sigma})_{11} $, whereas the right-hand side approaches either $ \infty $ or $ -\infty $, yielding a contradiction. Thus we must have $ c(x_1) = 0 $ and by the same limiting argument we obtain $ \textbf{S}_1(x_1) = (\boldsymbol{\Sigma})_{11} $. As the choice of the element $ x_1 $ was arbitrary we have that the diagonal of $ \textbf{S}(\textbf{x}) $ consists of the variances of the components of $ \textbf{x} $. If $ p = 1 $ we are now done, so assume in the following that $ p > 1 $.

We next show that the $ (1, 2) $ off-diagonal element of $ \textbf{S}(\textbf{x}) $ is equal to $ (\boldsymbol{\Sigma})_{12} $, the covariance between $ x_1 $ and $ x_2 $. For that, consider the random vector $ \textbf{y} = (y_1 + y_2, z^*_2, \ldots , z_p^* ) $ where $ (y_1, y_2)^\top $ has the same joint distribution as $ (x_1, x_2)^\top $ and $ z^*_2, \ldots , z_p^* $ are independent standard normal random variables. We next write the first diagonal element of $ \textbf{S}_p(\textbf{y}) $ in two different ways. First, based on the previous paragraph, we have 
\begin{equation}\label{eq:sum_expansion_1}
\{\textbf{S}_p(\textbf{y})\}_{11} = \mathrm{Var}(y_1 + y_2) = \mathrm{Var}(x_1 + x_2) = (\boldsymbol{\Sigma})_{11} + 2 (\boldsymbol{\Sigma})_{12} + (\boldsymbol{\Sigma})_{22}.
 \end{equation}
Second, by Lemma \ref{lem:subvector} and full affine equivariance we have
\begin{equation}\label{eq:sum_expansion_2}
\{\textbf{S}_p(\textbf{y})\}_{11} = \textbf{S}_1(y_1 + y_2) = \mathds{1}^\top \textbf{S}_2\{(y_1, y_2)^\top) \} \mathds{1}.
 \end{equation}
Using Lemmas \ref{lem:subvector}, \ref{lem:self_contained} and the equality of the diagonals of $ \textbf{S}(\textbf{x}) $ and  $ \boldsymbol{\Sigma} $ we obtain that the three unique elements of $ \textbf{S}_2\{(y_1, y_2)^\top) \} $ are equal to $ \textbf{S}_1(y_1) = \textbf{S}_1(x_1) = (\boldsymbol{\Sigma})_{11}$, $ \textbf{S}_1(y_2) = \textbf{S}_1(x_2) = (\boldsymbol{\Sigma})_{22}$ and $ \textbf{S}_2\{(y_1, y_2)^\top) \}_{12} = \textbf{S}_2\{(x_1, x_2)^\top) \}_{12} = \{\textbf{S}(\textbf{x})\}_{12} $. Plugging these in to \eqref{eq:sum_expansion_2} and equating with \eqref{eq:sum_expansion_1} now yields the desired equality,  $ \{\textbf{S}(\textbf{x})\}_{12} = (\boldsymbol{\Sigma})_{12} $. As the choice of the elements $ x_1 $ and $ x_2 $ was arbitrary, the analogous result holds for all off-diagonal elements of $ \textbf{S}(\textbf{x}) $, and consequently, $\textbf{S}(\textbf{x}) = \boldsymbol{\Sigma} = \mathrm{Cov}(\textbf{x})$.

\end{proof}

\section{Discussion}

In this short note we proved that under natural regularity conditions covariance matrix is the only scatter functional satisfying $ \#2 $ (additivity) or $ \#4 $ (full affine equivariance). The main technique used in the proofs of Propositions \ref{prop:additivity} and \ref{prop:fae} was quite elementary, requiring passing a limit to the argument of the scatter functional and invoking the central limit theorem to establish a connection to the normal distribution. The same approach, however, seems not to be sufficient to prove or disprove the conjecture of \cite{nordhausen2015cautionary} that also $ \# 3$ is a characterizing property of $ \mathrm{Cov} $.  

A weaker version of $ \#3 $, the \textit{joint independence property}, has received more attention and is not unique to the covariance matrix.  A scatter functional $ \textbf{S} \in \mathcal{S} $ has the joint independence property if $ \textbf{S}(\textbf{x}) $ is diagonal whenever the components of $ \textbf{x} $ are independent. For example, $ \textbf{S}(\textbf{x}) = \mathrm{E} \{ \textbf{x} \textbf{x}^\top \mathrm{Cov}(\textbf{x})^{-1} \textbf{x} \textbf{x}^\top \} $ possesses the property and so does any symmetrized scatter functional, $ \textbf{S}^*:\mathcal{D}(\mathbb{R}^p) \rightarrow \mathbb{R}^{p \times p}_{++} $ with $ \textbf{x} \mapsto \textbf{S}(\textbf{x}_1 - \textbf{x}_2) $ where $ \textbf{S} \in \mathcal{S} $ and $ \textbf{x}_1, \textbf{x}_2 $ are independent copies of $ \textbf{x} $, see \cite{oja2006scatter, taskinen2007independent} where pairs of scatter functionals with the joint independence property are used to solve the independent component problem. An open question regarding the joint independence property, remarked also in \cite{nordhausen2015cautionary}, is whether the set of all symmetrized scatter functionals is actually equal to the set of all scatter functionals with the joint independence property.

A further interesting question is whether our results generalize also in some form to \textit{shape functionals}. A functional $\textbf{S}:\mathcal{D}(\mathbb{R}^p) \rightarrow \mathbb{R}^{p \times p}_{++}$ is called a shape functional if it is affine equivariant up to proportionality: $\textbf{S}(\textbf{A} \textbf{x} + \textbf{b}) = c \textbf{A} \textbf{S}(\textbf{x}) \textbf{A}^\top$ for all $\textbf{x} \in \mathcal{D}(\mathbb{R}^p)$, invertible $\textbf{A} \in \mathbb{R}^{p \times p}$, $\textbf{b} \in \mathbb{R}^p$ and some $ c = c(\textbf{S}, \textbf{x}, \textbf{A}) \in \mathbb{R}$. Some inspection reveals that at least the limit trick used in this note can not be used in the context of shape functionals as the constant of proportionality $ c $ may also depend on the sample size $ n $ after having pulled out the factor $ 1/\sqrt{n} $.


\appendix

\section{Auxiliary results}\label{sec:appendix}

In the following four lemmas we assume that $ \textbf{S} \in \mathcal{S} $ is a fixed, arbitrary full affine equivariant scatter functional and $ \mathbb{S} $ is the countable set of functionals
related to it described in Definition \ref{def:fae_scatter}.

The first lemma says that any of the functionals $ \textbf{S}_k $, $ k \in \mathbb{N} $, can be built up from the ``variances'' and ``covariances'' found in $ \textbf{S}_1 $ and $ \textbf{S}_2 $.

\begin{lemma}\label{lem:subvector}
	Let $ k \in \mathbb{N} $ and $\textbf{x} = (x_1, \ldots , x_k) \in \mathcal{D}(\mathbb{R}^k)$. Then
	\begin{align*}
	\left\{ \textbf{S}_k ( \textbf{x} ) \right\}_{ii} &= \textbf{S}_1( \textbf{x}_{i} ) \\
	\left\{ \textbf{S}_k ( \textbf{x} ) \right\}_{ij} &= \left\{ \textbf{S}_2 ( \textbf{x}_{ij} ) \right\}_{12},
	\end{align*}
	for all $ i \neq j = 1, \ldots , k$ where $\textbf{x}_{ij} = (x_i, x_j)^\top$. 
\end{lemma}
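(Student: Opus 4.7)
The plan is to exploit full affine equivariance with rectangular (indeed, rank-deficient) projection matrices, which is exactly what Definition \ref{def:fae_scatter} was set up to allow. The key observation is that selecting individual coordinates $x_i$ or coordinate pairs $(x_i, x_j)^\top$ from $\textbf{x}$ is itself a linear map $\textbf{A}\textbf{x}$ with $\textbf{A}$ a rectangular $\{0,1\}$-matrix, so the statement is essentially a direct unpacking of property (ii) in Definition \ref{def:fae_scatter} for these specific $\textbf{A}$.

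For the diagonal formula, I would fix $i \in \{1, \ldots, k\}$ and take $\textbf{A} = \textbf{e}_i^\top \in \mathbb{R}^{1 \times k}$, where $\textbf{e}_i$ is the $i$-th standard basis vector of $\mathbb{R}^k$. Then $\textbf{A}\textbf{x} = x_i \in \mathcal{D}(\mathbb{R})$, and applying full affine equivariance with $\ell = k$ and $\textbf{b} = \textbf{0}$ gives
\[
\textbf{S}_1(x_i) = \textbf{S}_1(\textbf{A}\textbf{x}) = \textbf{A}\,\textbf{S}_k(\textbf{x})\,\textbf{A}^\top = \textbf{e}_i^\top \textbf{S}_k(\textbf{x}) \textbf{e}_i = \{\textbf{S}_k(\textbf{x})\}_{ii}.
\]

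For the off-diagonal formula, I would fix $i \neq j$ and take $\textbf{A} \in \mathbb{R}^{2 \times k}$ to be the matrix whose first row is $\textbf{e}_i^\top$ and whose second row is $\textbf{e}_j^\top$. Then $\textbf{A}\textbf{x} = (x_i, x_j)^\top = \textbf{x}_{ij}$, so full affine equivariance yields
\[
\textbf{S}_2(\textbf{x}_{ij}) = \textbf{S}_2(\textbf{A}\textbf{x}) = \textbf{A}\,\textbf{S}_k(\textbf{x})\,\textbf{A}^\top,
\]
and reading off the $(1,2)$-entry of the right-hand side gives $\textbf{e}_i^\top \textbf{S}_k(\textbf{x}) \textbf{e}_j = \{\textbf{S}_k(\textbf{x})\}_{ij}$, which is the claim.

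There is no real obstacle here; the entire content of the lemma is that full affine equivariance is strong enough to allow dimension-reducing projections, and Definition \ref{def:fae_scatter} is explicitly written to include this case (hence the broadening of the range to positive semidefinite matrices, needed since $\textbf{A}\textbf{S}_k(\textbf{x})\textbf{A}^\top$ can be singular when $\textbf{A}$ is rank-deficient). The only point worth flagging is that we must invoke the stronger Definition \ref{def:fae_scatter} rather than Definition 1, because the latter only handles square invertible matrices.
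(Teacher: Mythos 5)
Your proof is correct and follows essentially the same route as the paper: both arguments apply property (ii) of Definition \ref{def:fae_scatter} with the coordinate-selecting matrices $\textbf{e}_i^\top \in \mathbb{R}^{1 \times k}$ and the $2 \times k$ matrix with rows $\textbf{e}_i^\top, \textbf{e}_j^\top$, and then read off the relevant entries. Your remark that the rectangular (rank-deficient) case of full affine equivariance, rather than Definition 1, is what makes this work matches the paper's intent exactly.
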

\begin{proof}
	Let $ \textbf{e}_{ik} \in \mathbb{R}^k$ refer to the $ i $th standard basis vector of $ \mathbb{R}^k $. The first claim follows then by observing that
	\[ 
	\textbf{S}_1 ( \textbf{x}_{i} ) = \textbf{S}_1 ( \textbf{e}_{ik}^\top \textbf{x} ) = \textbf{e}_{ik}^\top \textbf{S}_k (\textbf{x} ) \textbf{e}_{ik} =  \left\{ \textbf{S}_k ( \textbf{x} ) \right\}_{ii},
	\]
	and the second claim comes similarly from
	\[ 
	\left\{ \textbf{S}_2 ( \textbf{x}_{ij} ) \right\}_{12} = \textbf{e}_{12}^\top \textbf{S}_2 ( \textbf{x}_{ij} ) \textbf{e}_{22} = \textbf{e}_{12}^\top \textbf{S}_2 \{ (\textbf{e}_{ik}, \textbf{e}_{jk})^\top \textbf{x} \} \textbf{e}_{22} = \textbf{e}_{ik}^\top \textbf{S}_k ( \textbf{x} ) \textbf{e}_{jk} = \left\{ \textbf{S}_k ( \textbf{x} ) \right\}_{ij},
	\]
	where the second-to-last equalities use the full affine equivariance.
\end{proof}

The next two lemmas state that if $ \textbf{S} $ is either normal continuous or self-contained the respective property is carried over also to $ \textbf{S}_1, \ldots \textbf{S}_{p} $ and the final one says that in the case of $ p = 1 $ we can also move one step to the opposite direction and the self-containedness of $ \textbf{S} $ implies the self-containedness of $ \textbf{S}_2 $. 

\begin{lemma}\label{lem:normal_continuous}
	Assume that $ \textbf{S}$ is normal continuous. Then each of the functionals $ \textbf{S}_k $, $ k \in \mathbb{N}, k \leq p $, is also normal continuous.
\end{lemma}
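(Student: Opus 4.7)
The plan is to deduce the normal continuity of each $\textbf{S}_k$ with $k \leq p$ from that of $\textbf{S} = \textbf{S}_p$ by embedding a convergent $k$-dimensional sequence into a convergent $p$-dimensional one via padding with independent standard normals, and then recovering $\textbf{S}_k$ as an appropriate projection of $\textbf{S}_p$ using full affine equivariance.

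Concretely, fix $k \leq p$ and let $\textbf{x}_n \in \mathcal{D}(\mathbb{R}^k)$ be a zero-mean sequence with $\textbf{x}_n \to_D \mathcal{N}(\textbf{0}, \boldsymbol{\Sigma})$ for some $\boldsymbol{\Sigma} \in \mathbb{R}^{k \times k}_{++}$. On a suitable product space, take $\textbf{z} \sim \mathcal{N}(\textbf{0}, \textbf{I}_{p-k})$ independent of all the $\textbf{x}_n$, and set $\textbf{y}_n = (\textbf{x}_n^\top, \textbf{z}^\top)^\top \in \mathcal{D}(\mathbb{R}^p)$. Because $\textbf{z}$ is a fixed random vector independent of $\textbf{x}_n$, the joint distribution of $(\textbf{x}_n, \textbf{z})$ converges weakly to the product $\mathcal{N}(\textbf{0}, \boldsymbol{\Sigma}) \otimes \mathcal{N}(\textbf{0}, \textbf{I}_{p-k}) = \mathcal{N}(\textbf{0}, \boldsymbol{\Sigma} \oplus \textbf{I}_{p-k})$, and $\textbf{y}_n$ has mean zero. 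Normal continuity of $\textbf{S} = \textbf{S}_p$ then gives
\[
\textbf{S}_p(\textbf{y}_n) \longrightarrow \boldsymbol{\Sigma} \oplus \textbf{I}_{p-k}
\]
in some matrix norm, and hence (by equivalence of norms on finite-dimensional spaces) blockwise.

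To pull this back to $\textbf{S}_k$, let $\textbf{A} = [\textbf{I}_k, \textbf{0}_{k \times (p-k)}] \in \mathbb{R}^{k \times p}$, so that $\textbf{A} \textbf{y}_n = \textbf{x}_n$. The full affine equivariance property (ii) in Definition \ref{def:fae_scatter} yields
\[
\textbf{S}_k(\textbf{x}_n) = \textbf{S}_k(\textbf{A} \textbf{y}_n) = \textbf{A} \textbf{S}_p(\textbf{y}_n) \textbf{A}^\top,
\]
which is exactly the top-left $k \times k$ block of $\textbf{S}_p(\textbf{y}_n)$. Since this block converges to $\boldsymbol{\Sigma}$, we conclude $\textbf{S}_k(\textbf{x}_n) \to \boldsymbol{\Sigma}$, giving normal continuity of $\textbf{S}_k$.

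The only real subtlety is the weak-convergence step justifying $\textbf{y}_n \to_D \mathcal{N}(\textbf{0}, \boldsymbol{\Sigma} \oplus \textbf{I}_{p-k})$: one must ensure that the independent padding can be carried out within $\mathcal{D}(\mathbb{R}^p)$ and that the joint convergence follows from the marginal convergence of $\textbf{x}_n$. Both points are routine—the first is covered by the closure of $\mathcal{D}(\mathbb{R}^p)$ under the given operations, and the second follows from the fact that, for a fixed independent factor, weak convergence of one factor forces weak convergence of the product (e.g., via characteristic functions, since $\varphi_{(\textbf{x}_n, \textbf{z})}(\textbf{s}, \textbf{t}) = \varphi_{\textbf{x}_n}(\textbf{s}) \varphi_{\textbf{z}}(\textbf{t}) \to \varphi_{\mathcal{N}(\textbf{0}, \boldsymbol{\Sigma})}(\textbf{s}) \varphi_{\textbf{z}}(\textbf{t})$).
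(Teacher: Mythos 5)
Your proof is correct and follows essentially the same route as the paper: pad $\textbf{x}_n$ with independent standard normal coordinates, apply normal continuity of $\textbf{S}_p$ to the padded sequence, and extract $\textbf{S}_k$ as the top-left block via full affine equivariance with the projection matrix. Your extra remark justifying the joint weak convergence via characteristic functions is a detail the paper leaves implicit, but it is the same argument.
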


\begin{proof}
	The case $ \textbf{S}_p = \textbf{S} $ follows trivially so fix $ k < p $ and let $ \textbf{x}_n \in \mathcal{D}(\mathbb{R}^k) $ be a sequence of random variables converging in distribution to $\mathcal{N}(\textbf{0}, \boldsymbol{\Sigma})$ for some $ \boldsymbol{\Sigma} \in \mathbb{R}^{k \times k}_{++} $. Construct then a sequence $ \textbf{x}^*_n = (\textbf{x}_n, x^*_{k+1}, \ldots, x^*_p)^\top $ of $ p $-variate random vectors where $ x^*_{k+1}, \ldots, x^*_p $ are independent standard normal random variables independent of the sequence $ \textbf{x}_n $. Then the sequence $ \textbf{x}^*_n $ converges in distribution to $\mathcal{N}\{ \textbf{0}, \mathrm{diag}( \boldsymbol{\Sigma}, \textbf{I}_{p-k}) \}$ and by the normal continuity of $ \textbf{S}_p = \textbf{S} $ we have that $	\textbf{S}_p(\textbf{x}^*_n) \rightarrow \mathrm{diag}( \boldsymbol{\Sigma}, \textbf{I}_{p-k})$. Letting $ \textbf{E}_k $ contain the first $ k $ columns of the $ p \times p$ identity matrix, we have by the continuous mapping theorem,
	\[ 
	\textbf{S}_k(\textbf{x}_n) = \textbf{S}_k(\textbf{E}_k^\top \textbf{x}^*_n) = \textbf{E}_k^\top \textbf{S}_p(\textbf{x}^*_n) \textbf{E}_k \rightarrow \textbf{E}_k^\top \mathrm{diag}( \boldsymbol{\Sigma}, \textbf{I}_{p-k}) \textbf{E}_k = \boldsymbol{\Sigma},
	\]
	and hence $ \textbf{S}_k $ is normal continuous.
\end{proof}

\begin{lemma}\label{lem:self_contained}
	Assume that $ \textbf{S}$ is self-contained. Then each of the functionals $ \textbf{S}_k $, $ k \in \mathbb{N}, k \leq p $, is also self-contained.
\end{lemma}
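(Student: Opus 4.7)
The plan is to mimic the argument used in Lemma \ref{lem:normal_continuous}, replacing the distributional convergence argument by a direct equality-in-distribution argument. The underlying idea is simple: any $k$-dimensional random vector (with $k \le p$) can be lifted to a $p$-dimensional one by appending independent standard normal coordinates, and this lifting preserves equality in distribution. Self-containedness of $\textbf{S} = \textbf{S}_p$ then transfers to $\textbf{S}_k$ via Lemma \ref{lem:subvector}.

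More concretely, fix $k \le p$ and take two vectors $\textbf{x}_1, \textbf{x}_2 \in \mathcal{D}(\mathbb{R}^k)$ with $\textbf{x}_1 \stackrel{d}{=} \textbf{x}_2$. Pick $z_{k+1}, \ldots, z_p$ iid standard normal and jointly independent of both $\textbf{x}_1$ and $\textbf{x}_2$, and define the augmented vectors $\textbf{x}^*_i = (\textbf{x}_i^\top, z_{k+1}, \ldots , z_p)^\top \in \mathcal{D}(\mathbb{R}^p)$ for $i = 1, 2$. Because $\textbf{x}_i$ is independent of $(z_{k+1}, \ldots, z_p)$ and $\textbf{x}_1 \stackrel{d}{=} \textbf{x}_2$, the product structure of their joint laws forces $\textbf{x}^*_1 \stackrel{d}{=} \textbf{x}^*_2$. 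By the self-containedness assumption on $\textbf{S}_p = \textbf{S}$ we then obtain $\textbf{S}_p(\textbf{x}^*_1) = \textbf{S}_p(\textbf{x}^*_2)$.

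To finish, I would appeal to full affine equivariance (or equivalently Lemma \ref{lem:subvector}): letting $\textbf{E}_k \in \mathbb{R}^{p \times k}$ denote the first $k$ columns of $\textbf{I}_p$, we have $\textbf{x}_i = \textbf{E}_k^\top \textbf{x}^*_i$, so
\[
\textbf{S}_k(\textbf{x}_i) = \textbf{S}_k(\textbf{E}_k^\top \textbf{x}^*_i) = \textbf{E}_k^\top \textbf{S}_p(\textbf{x}^*_i) \textbf{E}_k, \qquad i = 1, 2.
\]
Combining this with $\textbf{S}_p(\textbf{x}^*_1) = \textbf{S}_p(\textbf{x}^*_2)$ immediately yields $\textbf{S}_k(\textbf{x}_1) = \textbf{S}_k(\textbf{x}_2)$, which is self-containedness of $\textbf{S}_k$.

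There is no serious obstacle here; the only point that requires a brief comment is that equality in distribution of $\textbf{x}_1$ and $\textbf{x}_2$ transfers to equality in distribution of the augmented vectors, which is immediate from independence of the appended coordinates. Note also that the argument only works for $k \le p$, since the construction is a restriction from the known functional $\textbf{S}_p$; the opposite direction (obtaining self-containedness of $\textbf{S}_{k}$ for $k > p$ from that of $\textbf{S}_p$) is exactly what the separate lemma for $p = 1, k = 2$ is needed for in the main proof.
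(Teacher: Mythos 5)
Your proposal is correct and follows essentially the same route as the paper: augment the $k$-dimensional vectors with independent standard normal coordinates to get identically distributed $p$-dimensional vectors, apply the self-containedness of $\textbf{S}_p = \textbf{S}$, and project back with $\textbf{E}_k$ via full affine equivariance (Lemma \ref{lem:subvector}). Your closing remark about the restriction $k \le p$ and the separate role of Lemma \ref{lem:self_contained_2} also matches the paper's structure.
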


\begin{proof}
	The case $ \textbf{S}_p = \textbf{S} $ follows trivially so fix $ k < p $ and let $ \textbf{x}, \textbf{y} \in \mathcal{D}(\mathbb{R}^k) $ be identically distributed random vectors. As in the proof of Lemma \ref{lem:normal_continuous} we fill up $ \textbf{x}, \textbf{y} \in \mathcal{D}(\mathbb{R}^p)$ with independent standard normal random variables to obtain the identically distributed $ \textbf{x}^*, \textbf{y}^* $. By the self-containedness of $ \textbf{S}_p = \textbf{S} $ we have $ \textbf{S}_p(\textbf{x}^*) = \textbf{S}_p(\textbf{y}^*)$ and letting $ \textbf{E}_k $ be as in the proof of Lemma \ref{lem:normal_continuous}, we obtain,
	\[ 
	\textbf{S}_k(\textbf{x}) = \textbf{S}_k(\textbf{E}_k^\top \textbf{x}^*) = \textbf{E}_k^\top \textbf{S}_p(\textbf{x}^*) \textbf{E}_k = \textbf{E}_k^\top \textbf{S}_p(\textbf{y}^*) \textbf{E}_k = \textbf{S}_k(\textbf{E}_k^\top \textbf{y}^*) = \textbf{S}_k(\textbf{y}),
	\]
	concluding the proof.
\end{proof}

\begin{lemma}\label{lem:self_contained_2}
	Let $ p = 1 $ and assume that $ \textbf{S}$ is self-contained. Then $ \textbf{S}_2 $ is also self-contained.
\end{lemma}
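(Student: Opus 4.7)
The plan is to exploit full affine equivariance in the reverse direction of Lemma \ref{lem:self_contained}: rather than padding with independent normals to move up in dimension (which is impossible here, since $\textbf{S}_p = \textbf{S}_1$ is already the top of our family), I will use the fact that full affine equivariance lets us recover every entry of $\textbf{S}_2(\textbf{x})$ from values of $\textbf{S}_1$ at univariate linear combinations of the components of $\textbf{x}$. Once this is done, self-containedness of $\textbf{S} = \textbf{S}_1$ finishes the argument.

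Concretely, fix identically distributed $\textbf{x} = (x_1, x_2)^\top, \textbf{y} = (y_1, y_2)^\top \in \mathcal{D}(\mathbb{R}^2)$. For the diagonal entries I would apply Lemma \ref{lem:subvector} to obtain $\{\textbf{S}_2(\textbf{x})\}_{ii} = \textbf{S}_1(x_i)$ and analogously for $\textbf{y}$. Since the joint law of $\textbf{x}$ coincides with that of $\textbf{y}$, the one-dimensional marginals $x_i$ and $y_i$ are identically distributed, and the self-containedness of $\textbf{S}=\textbf{S}_1$ yields $\textbf{S}_1(x_i) = \textbf{S}_1(y_i)$. Hence the diagonals of $\textbf{S}_2(\textbf{x})$ and $\textbf{S}_2(\textbf{y})$ agree.

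For the off-diagonal entry I would apply condition (ii) of Definition \ref{def:fae_scatter} with the $1 \times 2$ matrix $\textbf{A} = (1,1)$, giving
\[
\textbf{S}_1(x_1 + x_2) \;=\; (1,1)\,\textbf{S}_2(\textbf{x})\,(1,1)^\top \;=\; \{\textbf{S}_2(\textbf{x})\}_{11} + 2\{\textbf{S}_2(\textbf{x})\}_{12} + \{\textbf{S}_2(\textbf{x})\}_{22},
\]
so that $\{\textbf{S}_2(\textbf{x})\}_{12}$ is an explicit linear combination of $\textbf{S}_1(x_1)$, $\textbf{S}_1(x_2)$ and $\textbf{S}_1(x_1+x_2)$. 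The equality $\textbf{x} \stackrel{d}{=} \textbf{y}$ forces $x_1 + x_2 \stackrel{d}{=} y_1 + y_2$, so self-containedness of $\textbf{S}_1$ equates each of these three quantities with its counterpart for $\textbf{y}$, and the off-diagonal entries coincide too. Combined with the diagonal computation, this gives $\textbf{S}_2(\textbf{x}) = \textbf{S}_2(\textbf{y})$.

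The main conceptual obstacle is noticing that the padding trick of Lemma \ref{lem:self_contained} breaks down when $p=1$ because there is no larger ambient space to embed into; the remedy is to instead collapse down via full affine equivariance and rely on the fact that $\textbf{S}_1$ evaluated at the three linear combinations $x_1$, $x_2$, $x_1+x_2$ suffices to recover all three free entries of the symmetric matrix $\textbf{S}_2(\textbf{x})$. The rest of the argument is then a direct consequence of the assumed self-containedness of $\textbf{S} = \textbf{S}_1$.
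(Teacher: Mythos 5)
Your proposal is correct and follows essentially the same route as the paper: diagonal entries via Lemma \ref{lem:subvector} and the self-containedness of $\textbf{S}_1$, and the off-diagonal entry recovered from $\textbf{S}_1(x_1+x_2) = \{\textbf{S}_2(\textbf{x})\}_{11} + 2\{\textbf{S}_2(\textbf{x})\}_{12} + \{\textbf{S}_2(\textbf{x})\}_{22}$ using full affine equivariance, then matching with $\textbf{y}$ since $x_1+x_2$ and $y_1+y_2$ are identically distributed. No gaps.
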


\begin{proof}
	Let $ \textbf{x} = (x_1, x_2), \textbf{y} = (y_1, y_2) \in \mathcal{D}(\mathbb{R}^2) $ be identically distributed random vectors. By Lemma \ref{lem:subvector} and the self-containedness of $ \textbf{S}_1 $, the matching diagonal elements of $ \textbf{S}_2(\textbf{x}) $ and $ \textbf{S}_2(\textbf{y}) $ are equal and for our claim it is sufficient to show that $ \{\textbf{S}_2(\textbf{x})\}_{12} = \{\textbf{S}_2(\textbf{y})\}_{12} $. Again by the self-containedness of $ \textbf{S}_1 $, we have $ \textbf{S}_1(x_1 + x_2) = \textbf{S}_1(y_1 + y_2) $ and by full affine equivariance and Lemma \ref{lem:subvector} this is equivalent to
	\[
	\textbf{S}_1(x_1) + 2 \{\textbf{S}_2(\textbf{x})\}_{12} + \textbf{S}_1(x_2) = \textbf{S}_1(y_1) + 2 \{\textbf{S}_2(\textbf{y})\}_{12} + \textbf{S}_1(y_2).
	\]
	Applying once more the self-containedness of $ \textbf{S}_1 $ now yields the desired result.
\end{proof}


\bibliography{mybibfile}

\begin{thebibliography}{1}
\expandafter\ifx\csname url\endcsname\relax
  \def\url#1{\texttt{#1}}\fi
\expandafter\ifx\csname urlprefix\endcsname\relax\def\urlprefix{URL }\fi
\expandafter\ifx\csname href\endcsname\relax
  \def\href#1#2{#2} \def\path#1{#1}\fi

\bibitem{tyler2009invariant}
D.~E. Tyler, F.~Critchley, L.~D{\"u}mbgen, H.~Oja, Invariant co-ordinate
  selection, Journal of the Royal Statistical Society: Series B (Statistical
  Methodology) 71~(3) (2009) 549--592.

\bibitem{rousseeuw2013high}
P.~Rousseeuw, M.~Hubert, High-breakdown estimators of multivariate location and
  scatter, in: Robustness and Complex Data Structures, Springer, 2013, pp.
  49--66.

\bibitem{dumbgen2015m}
L.~D{\"u}mbgen, M.~Pauly, T.~Schweizer, {$M$}-functionals of multivariate
  scatter, Statistics Surveys 9 (2015) 32--105.

\bibitem{fang1990symmetric}
K.-T. Fang, S.~Kotz, K.~W. Ng, Symmetric multivariate and related
  distributions, Chapman and Hall, 1990.

\bibitem{nordhausen2015cautionary}
K.~Nordhausen, D.~E. Tyler, A cautionary note on robust covariance plug-in
  methods, Biometrika 102~(3) (2015) 573--588.

\bibitem{dumbgen2005breakdown}
L.~D{\"u}mbgen, D.~E. Tyler, On the breakdown properties of some multivariate
  {M}-functionals, Scandinavian Journal of Statistics 32~(2) (2005) 247--264.

\bibitem{li1991sliced}
K.-C. Li, Sliced inverse regression for dimension reduction, Journal of the
  American Statistical Association 86~(414) (1991) 316--327.

\bibitem{oja2006scatter}
H.~Oja, S.~Sirki{\"a}, J.~Eriksson, Scatter matrices and independent component
  analysis, Austrian Journal of Statistics 35~(2\&3) (2006) 175--189.

\bibitem{taskinen2007independent}
S.~Taskinen, S.~Sirki{\"a}, H.~Oja, Independent component analysis based on
  symmetrised scatter matrices, Computational Statistics \& Data Analysis
  51~(10) (2007) 5103--5111.

\end{thebibliography}

\end{document}